\newtheorem{theorem}{Theorem}[section]
\newtheorem{corollary}[theorem]{Corollary}
\newtheorem{lemma}[theorem]{Lemma}
\newtheorem{proposition}[theorem]{Proposition}
\theoremstyle{definition}
\newtheorem{definition}[theorem]{Definition}
\newtheorem{remark}[theorem]{Remark}
\date{}
\begin{document}

\author{Sergei Artamoshin
\thanks{The author wants to thank professor Adam Koranyi, who pointed out that the function $\omega(x,y)$ and its radialization over sphere are eigenfunctions of the hyperbolic Laplacian. The author is also thankful to professor J\a'ozef Dodziuk for his interest to this paper and for his comments.}\\ \\Department of Mathematical Science, Central Connecticut State University\\New Britain, USA\\ \\sartamoshin@gmail.com%
}

\title{\textbf{One Radius Theorem For A Radial Eigenfunction Of A Hyperbolic Laplacian}}

\maketitle

\begin{abstract}
Let us fix two different radial eigenfunctions of a hyperbolic Laplacian and assume that both of them have the same value at the origin. Both eigenvalues can be complex numbers. The main goal of this paper is to estimate the lower bound for the interval (0,T], where these two eigenfunctions must assume different values at every point. We shall see that T is a function of two different eigenvalues corresponding to the given pair of radial eigenfunctions. On the other hand, we shall see that at every fixed point and for the value already assumed by a radial eigenfunction at the fixed point, there are infinitely many other radial eigenfunctions, assuming the same value at the fixed point and satisfying the same initial condition.

\end{abstract}

\bigskip

\section{Introduction}
Consider the set of all radial eigenfunctions of the Hyperbolic Laplacian assuming the value 1 at the origin, i.e. the set of all solutions for the following system
\begin{equation}\label{Intro-2}
\left\{
  \begin{array}{ll}
     & \hbox{$\varphi^{''}(r)+\frac{k}{\rho}\coth\left(\frac{r}{\rho}\right)\varphi^{'}(r)+\lambda\varphi(r)=0$, $\lambda\in\mathbf{C}$;} \\
     & \hbox{$\varphi(0)=1$\,,}
  \end{array}
\right.
\end{equation}
written in the geodesic polar coordinates of the hyperbolic space of constant sectional curvature $\kappa=-1/\rho^2$.
Recall that for every $\lambda\in\mathbf{C}$ there exists a unique solution $\varphi_\lambda(r)$ such that $\varphi_\lambda(0)=1$, see \cite{Chavel} (p. 272). If we choose two radial eigenfunctions $\varphi_\mu(r)$ and $\varphi_\nu(r)$ such that $\varphi_\mu(0)~=~\varphi_\nu(0)~=~1$, then, how many values of $r>0$ such that $\varphi_\mu(r)=\varphi_\nu(r)$ are sufficient to ensure that $\mu=\nu$ and $\varphi_\mu(r)\equiv\varphi_\nu(r)$? We shall see that we need only one such a value of $r$ if the value is small enough. In other words, if $\mu\neq\nu$, then there exists an interval $(0,T(\mu,\nu)]$, such that $\varphi_\mu(r)\neq\varphi_\nu(r)$ for all $r\in(0,T(\mu,\nu)]$. In particular, we prove that if $\mu\neq \nu$ are real and $\mu, \nu\leq k^2/4$ then $\varphi_\mu(r)\neq\varphi_\nu(r)$ for all $r\in(0,\infty)$. So, if $\mu, \nu\leq k^2/4$ and $\varphi_\mu(r)=\varphi_\nu(r)$ just for one arbitrary $r>0$, then $\mu=\nu$ and $\varphi_\mu(r)\equiv\varphi_\nu(r)$.\\
On the other hand, according to Corollaries~\ref{Many_Eigefcns_Cor} and~\ref{Many_Eigefcns_Cor2} from p.~\pageref{Many_Eigefcns_Cor}, for every fixed point $r_0\in[0,\infty)$, and for a fixed radial eigenfunction $\varphi_\nu(r)$, assuming the value one at the origin, there are infinitely many other radial eigenfunctions assuming the value one at the origin and the value $\varphi_\nu(r_0)$ at the point $r_0$.

\section{Some geometric preliminaries.}

\subsection {Notations and basic definitions}

For the future reference, let us introduce the following notation. Let $H_\rho^n$ and $B^n_\rho$ be the half-space model and the ball model, respectively, of a hyperbolic $n-$dimensional space with a constant sectional curvature $\kappa=1/{\rho^2}<0$. Recall that the half-space model $H_\rho^n$ consists of the open half-space of points $(x_1, ..., x_{n-1}, t)$ in $R^n$ for all $t>0$ and the metric is given by $(\rho/t)|ds|$, where $|ds|$ is the Euclidean distance element. The ball model $B^n_\rho$ consists of the open unit ball $|X|^2+T^2<\rho^2, (X,T)=(X_1, ..., X_{n-1}, T)$ in $R^n$, and the metric for this model is given by $2\rho^2|ds|/(\rho^2-|X|^2-T^2)$.

Recall also that in a hyperbolic space a Laplacian can be represent as
\begin{equation}\label{Uniq-6}
\begin{split}
    \triangle
    & =\frac{1}{\rho^2}\left[ t^2\left( \frac{\partial^2}{\partial x_1^2}+...+\frac{\partial^2}{\partial x_{n-1}^2} + \frac{\partial^2}{\partial t^2} \right)-(n-2)t\frac{\partial}{\partial t} \right]
    \\& =\frac{\partial^2}{\partial r^2}+
    \frac{n-1}{\rho}\coth\left(\frac{r}{\rho}\right)\frac{\partial}{\partial r}+\triangle_{S(0;r)}\,,
\end{split}
\end{equation}
where the first expression is the hyperbolic Laplacian in $H^n_\rho$ expressed by using Euclidean rectangular coordinates and the second expression represents  the hyperbolic Laplacian in $B^n_\rho$ expressed in the geodesic hyperbolic polar coordinates, where $\triangle_{S(0;r)}$ is the Laplacian on the geodesic sphere of a hyperbolic radius $r$ about the origin.

The next step is to introduce the function $\omega$ and show that $\omega^\alpha$ as an eigenfunction of the Hyperbolic Laplacian in $B^n_{\rho}$.

\begin{proposition}\label{eigenfunction-omega-proposition}
Let $u$ be any point of $S(\rho)$ and $m=(X,T)\in B(\rho)$, where $S(O;\rho)$ and $B(\rho)$ are the Euclidean sphere and ball, respectively, both of the same radius $\rho$ centered at the origin $O$. Let $\eta=|m|$ be the Euclidean distance between the origin $O$ and a point $m$, $k=n-1$ and
\begin{equation}\label{Uniq-11}
    \omega=\omega(u,m)=\frac{\rho^2-\eta^2}{|u-m|^2}=\frac{|u|^2-|m|^2}{|u-m|^2}\,.
\end{equation}
Then
\begin{equation}\label{Omega-as-Eigenfunction}
    \triangle_m \omega^\alpha+\frac{\alpha k-\alpha^2}{\rho^2}\, \omega^\alpha=0\,.
\end{equation}
\end{proposition}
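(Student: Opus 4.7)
My plan is to reduce the ball-model computation to a one-variable calculation in the half-space model, exploiting the fact that $\omega(u,m)$ is essentially a horospherical coordinate for the boundary point $u$. The key geometric observation is that the level sets of $\omega(u,\cdot)$ in $B(\rho)$ are precisely the Euclidean spheres internally tangent to $\partial B(\rho)$ at $u$; these are exactly the horospheres of $B^n_\rho$ based at $u$. Consequently, $\omega$ is invariant under the one-parameter family of parabolic hyperbolic isometries fixing $u$.

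I would then fix an orientation-preserving isometry $\Phi:B^n_\rho\to H^n_\rho$ sending $u$ to the ideal point $\infty$ in the half-space model (concretely, position $u=\rho e_n$ and take the Möbius inversion in the sphere of radius $2\rho$ centered at $-u$). Under $\Phi$, horospheres based at $u$ become horizontal hyperplanes $\{t=c\}$, so $\omega\circ\Phi^{-1}$ is a function of $t$ alone. Evaluating at one reference point (e.g.\ the origin $m=0$ of the ball, which satisfies $\omega=1$ and maps into the horosphere $t=\rho$) and using the fact that $\omega\circ\Phi^{-1}$ must scale correctly under parabolic translations, one identifies
\begin{equation*}
\omega(u,\Phi^{-1}(x,t))=\frac{t}{\rho}.
\end{equation*}

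Because the hyperbolic Laplacian is an isometry invariant, the desired identity \eqref{Omega-as-Eigenfunction} becomes the corresponding identity for $(t/\rho)^\alpha$ in $H^n_\rho$. Applying the first expression of \eqref{Uniq-6}, since $\Delta_e(t/\rho)^\alpha=\alpha(\alpha-1)t^{\alpha-2}/\rho^\alpha$ and $\partial_t(t/\rho)^\alpha=\alpha t^{\alpha-1}/\rho^\alpha$, I get
\begin{equation*}
\Delta_h\bigl(t/\rho\bigr)^\alpha=\frac{1}{\rho^2}\bigl[\alpha(\alpha-1)-(n-2)\alpha\bigr]\bigl(t/\rho\bigr)^\alpha=\frac{\alpha(\alpha-k)}{\rho^2}\bigl(t/\rho\bigr)^\alpha=-\frac{\alpha k-\alpha^2}{\rho^2}\bigl(t/\rho\bigr)^\alpha,
\end{equation*}
which is exactly what is wanted.

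The principal obstacle is verifying the explicit identification $\omega\circ\Phi^{-1}=t/\rho$; everything else is either the geometric observation about horospheres or a one-line half-space calculation. If one prefers to avoid this identification, there is a purely computational alternative carried out entirely in the ball model: write $\Delta_h\omega^\alpha=\alpha\omega^{\alpha-1}\Delta_h\omega+\alpha(\alpha-1)\omega^{\alpha-2}|\nabla_h\omega|^2$, compute $\Delta_h\omega$ and $|\nabla_h\omega|^2$ from the Euclidean gradient and Laplacian of $\omega$ via the conformal factor $2\rho^2/(\rho^2-|m|^2)$, and observe the two identities $|\nabla_h\omega|^2=\omega^2/\rho^2$ and $\Delta_h\omega=-(k-1)\omega/\rho^2$; substituting these into the chain-rule expression yields the claimed eigenvalue $-(\alpha k-\alpha^2)/\rho^2$.
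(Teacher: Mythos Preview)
Your approach is essentially the paper's: transport the problem to the half-space model via an isometry sending $u$ to $\infty$, identify $\omega$ with the height coordinate, and compute $\triangle t^\alpha$ there. The paper carries out the identification step more bluntly than you do: it simply writes down the Cayley transform $K:B^n_\rho\to H^n_\rho$ explicitly in coordinates (with $u=(0,\rho)$) and reads off from the second formula of the transform that $t=\omega(u,m)$ exactly, so no separate geometric argument about horospheres is needed.

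One small slip in your write-up: parabolic translations in $H^n_\rho$ (the maps $(x,t)\mapsto(x+a,t)$) fix $t$, so invoking them cannot help you determine which function of $t$ you have once you know $\omega\circ\Phi^{-1}$ depends only on $t$. To pin it down you need either the explicit M\"obius formula (as the paper uses) or the action of the hyperbolic dilations $(x,t)\mapsto(\lambda x,\lambda t)$, which in the ball model correspond to translations along the geodesic through $\pm u$; those do move horospheres and force $\omega\circ\Phi^{-1}$ to be a scalar multiple of $t$. Your fallback direct computation in the ball model via the conformal factor is a legitimate alternative route, just more laborious than the paper's two-line transfer.
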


\begin{proof}[Proof of the Proposition.]
Notice that
\begin{equation}\label{Uniq-12}
    \triangle t^\alpha+\frac{\alpha k-\alpha^2}{\rho^2} t^\alpha=0
\end{equation}
and the relationship between $H^n_\rho$ and $B^n_\rho$ is given by Cayley Transform \\
 $K:B^n_\rho\rightarrow H^n_\rho$ expressed by the following formulae
\begin{equation}\label{Uniq-13}
\begin{split}
& x=\frac{2\rho X}{|X|^2+(T-\rho)^2}
\\& t=\frac{\rho^2-|X|^2-T^2}{|X|^2+(T-\rho)^2}=\frac{\rho^2-\eta^2}{|u-m|^2}=\omega(u,m)\,,
\end{split}
\end{equation}
where $u=(0,\rho)\in\partial B(O;\rho)=S(O;\rho)$. Since Cayley transform is an isometry, an orthogonal system of geodesics defining the Laplacian in $B^n_\rho$ is mapped isometrically to an orthogonal system of geodesics in $H^n_\rho$ and then,
\begin{equation}\label{Uniq-14}
    \triangle t^\alpha=\triangle_m\omega^\alpha(u,m)\quad \text{and}\quad t^\alpha=\omega^\alpha(u,m)\,.
\end{equation}
Therefore, equation \eqref{Omega-as-Eigenfunction} is precisely equation \eqref{Uniq-12} written in $B^n_\rho$, which completes the proof of Proposition~\ref{eigenfunction-omega-proposition}.
\end{proof}



\subsection {Explicit representation of radial eigenfunctions.}

In this subsection we shall see that every radial eigenfunction in $B^{k+1}_{\rho}$ depending only on the distance from the origin has an explicit integral representation.

\begin{definition}\label{sharp-definition} Recall that $B^{k+1}_\rho$ is the ball model of the hyperbolic space~$H^{k+1}_\rho$ with the sectional curvature $\kappa=-1/\rho^2$ and let $B^{k+1}(O,\rho)$ be the Euclidean ball of radius $\rho$ centered at the origin and represents the ball model $B^{k+1}_\rho$. Suppose that $f$ is a function on $B^{k+1}_\rho$. We define its radialization about the origin $O$, written $f^\sharp_O (m)$, by setting
\begin{equation}\label{Radialization-Definition}
    f^\sharp_O(m)=\frac{1}{|S^k(|m|)|}\int\limits_{S^k(|m|)} f(m_1) dS_{m_1}\,,
\end{equation}
where the integration is considered with respect to the measure on $S^k(|m|)$ induced by the Euclidean metric of $\mathbb{R}^n\supset B^{k+1}(O,\rho)$; recall also that $|m|$ is the Euclidean distance between the origin $O$ and a point $m\in B^{k+1}_\rho$; $|S^k(|m|)|$ is the Euclidean volume of $S^k(|m|)$.
\end{definition}

\begin{lemma}\label{Laplacian-Commutes-lemma}
\begin{equation}\label{Laplacian-Commutes}
    \triangle_m f^\sharp_O(m)=\frac{1}{|S^k(|m|)|}\int\limits_{S^k(|m|)}\triangle_{m_1} f(m_1) dS_{m_1}\,.
\end{equation}
\end{lemma}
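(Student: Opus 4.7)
The plan is to prove this by exploiting the fact that the isotropy subgroup of hyperbolic isometries fixing the origin coincides with the Euclidean orthogonal group. In the ball model $B^{k+1}_\rho$, an isometry of the hyperbolic metric that fixes $O$ must fix all Euclidean spheres $S^k(\eta)$ centered at $O$ (because the hyperbolic distance to $O$ is a function of the Euclidean distance); conversely, every Euclidean orthogonal transformation $g\in K:=O(k+1)$ is a hyperbolic isometry. This identification is the key structural fact I will use.

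First I would rewrite the radialization as an average over $K$. For a fixed $m \in B^{k+1}_\rho$, the Euclidean sphere $S^k(|m|)$ is precisely the orbit $K\cdot m$, and the Euclidean surface measure on this sphere is invariant under $K$ and, up to scalar, uniquely determined by this invariance. Hence
\begin{equation*}
f^\sharp_O(m)=\int_K f(g\cdot m)\,dg,
\end{equation*}
where $dg$ is the normalized Haar measure on the compact group $K$ (the case $m=O$ is trivial).

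Next I would differentiate under the integral sign, which is justified by the smoothness of $f$, the compactness of $K$, and the smooth dependence of $g\cdot m$ on $m$. This gives
\begin{equation*}
\triangle_m f^\sharp_O(m)=\int_K \triangle_m\bigl[f(g\cdot m)\bigr]\,dg.
\end{equation*}
Because each $g\in K$ is a hyperbolic isometry, the Laplacian commutes with precomposition: $\triangle_m[f\circ g](m)=(\triangle f)(g\cdot m)$. Substituting and using the formula from the first step once more (applied to $\triangle f$ in place of $f$) yields
\begin{equation*}
\triangle_m f^\sharp_O(m)=\int_K(\triangle f)(g\cdot m)\,dg=(\triangle f)^\sharp_O(m),
\end{equation*}
which is the claim.

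The only delicate point is the justification of differentiation under the integral sign, and this is routine for smooth $f$ on a neighborhood of the compact orbit $K\cdot m$. One could instead give a direct computational proof using the polar decomposition \eqref{Uniq-6}, observing that for the radial function $f^\sharp_O$ the spherical piece $\triangle_{S(0;r)}$ annihilates $f^\sharp_O$, while on the right-hand side the corresponding contribution vanishes after integration over the closed sphere by the divergence theorem, and the radial derivatives commute with the spherical average. However this route requires care with the change of sphere as $r$ varies and duplicates, in coordinates, what the isometry argument handles conceptually in one line; so I would present the group-theoretic version.
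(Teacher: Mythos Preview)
Your proof is correct and follows essentially the same route as the paper: identify the spherical average with the Haar average over the rotation group acting on the orbit $K\cdot m$, and then use that each rotation is a hyperbolic isometry so that the Laplacian commutes with precomposition. The paper phrases this more tersely (using $\mathrm{SO}(k+1)$ rather than $O(k+1)$ and omitting the differentiation-under-the-integral remark), but the argument is the same.
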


\begin{proof}[Proof of Lemma~\ref{Laplacian-Commutes-lemma}]
Let us introduce two measures on $S^k$. One is the Lebesque measure of the sphere, i.e. the Riemannian volume of the flat metric of the ambient space restricted to $S^k$ and the other one is the Haar measure of $\text{SO}(k+1,R)$ pushed forward to the sphere by the map $T\rightarrow Tv_0$, where $v_0$ is a fixed vector on the sphere and $T\in\text{SO}(k+1,R)$. These two measures agree up to constant multiple factor so that these two averaging procedures must be the same. It is clear that Laplacian commutes with averaging over $\text{SO}(k+1,R)$, since any individual isometry of $\mathbb{R}^{k+1}$ commutes with Laplacian. Therefore, Laplacian must commute with radialization defined in \eqref{Radialization-Definition}. This completes the proof of Lemma~\ref{Laplacian-Commutes-lemma}.
\end{proof}



The next step is to obtain the explicit representation for radial eigenfunctions.

\begin{definition} Let $u\in S^k(\rho)$ be a fixed point and $|m_1|=|m|=\eta<\rho=|u|$. Then let us define
\begin{equation}\label{Radialization-of-omega}
    V_\alpha(\eta)=(\omega_O^\alpha)^\sharp(m)=
    \frac{1}{|S(\eta)|}\int\limits_{S(\eta)}\omega^\alpha(u,m_1)dS_{m_1}\,,
\end{equation}
where $\alpha$ is a complex number. Thus, $V_\alpha(\eta)$ is the radialization of $\omega^\alpha(u,m)$ about the origin.
\end{definition}




\begin{theorem}\label{Radial-func-repre-Thoerem} Let $r$ be the hyperbolic distance between the origin $O$ and $S(\eta)$. Then, the following function
\begin{equation}\label{Radial-func-presentation}
    \varphi_\mu(r)= V_\alpha(\eta(r))= V_\alpha\left(\rho\tanh\left(\frac{r}{2\rho}\right)\right)\,,
\end{equation}
where $\mu=(\alpha k-\alpha^2)/\rho^2$, is the unique radial eigenfunction assuming the value $1$ at the origin and corresponding to an eigenvalue $\mu$, i.e.,

\begin{equation}\label{Equation-for-Radi-Eigenfunction}
    \triangle\varphi_\mu(r)+\mu\varphi_\mu(r)=0.
\end{equation}
\end{theorem}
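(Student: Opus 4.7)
The plan is to assemble three ingredients already available in the paper: the fact that $\omega^{\alpha}$ is an eigenfunction of $\triangle$ with eigenvalue $-(\alpha k-\alpha^{2})/\rho^{2}$ (Proposition~\ref{eigenfunction-omega-proposition}), the fact that $\triangle$ commutes with radialization (Lemma~\ref{Laplacian-Commutes-lemma}), and the uniqueness of the solution to the initial value problem \eqref{Intro-2} cited from Chavel. The strategy is then simply: radialize $\omega^{\alpha}$, show the resulting function solves the eigenvalue equation with the correct initial condition, and identify it with $\varphi_{\mu}(r)$ via the change of variable $\eta=\rho\tanh(r/(2\rho))$.

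First I would apply Lemma~\ref{Laplacian-Commutes-lemma} together with Proposition~\ref{eigenfunction-omega-proposition} to compute
\begin{equation*}
\triangle_{m}\,(\omega_{O}^{\alpha})^{\sharp}(m)=\bigl(\triangle_{m_{1}}\omega^{\alpha}\bigr)^{\sharp}(m)=-\frac{\alpha k-\alpha^{2}}{\rho^{2}}\,(\omega_{O}^{\alpha})^{\sharp}(m),
\end{equation*}
so that $V_{\alpha}(\eta)$, being by construction rotation invariant about $O$, satisfies $\triangle V_{\alpha}+\mu V_{\alpha}=0$ with $\mu=(\alpha k-\alpha^{2})/\rho^{2}$. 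Next I would invoke the geodesic polar expression of $\triangle$ from \eqref{Uniq-6}: since $V_{\alpha}$ depends only on the radial variable, the spherical part $\triangle_{S(0;r)}$ annihilates it, and the equation reduces to the ODE in \eqref{Intro-2} with $k=n-1$.

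Then I would verify the initial condition. At $m=O$ we have $\eta=0$, hence $\omega(u,O)=(\rho^{2}-0)/|u|^{2}=1$, so the integrand in \eqref{Radialization-of-omega} is identically $1$ as $\eta\to 0$, giving $V_{\alpha}(0)=1$. To translate the Euclidean radial variable $\eta$ into the hyperbolic radius $r$, I would use the standard formula for the ball model $B^{k+1}_{\rho}$: the hyperbolic distance from the origin to a point at Euclidean distance $\eta$ is $r=\rho\ln\bigl((\rho+\eta)/(\rho-\eta)\bigr)=2\rho\,\mathrm{artanh}(\eta/\rho)$, which inverts to $\eta(r)=\rho\tanh(r/(2\rho))$. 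This is a routine consequence of the ball-model metric and presents no real difficulty.

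Finally, since $V_{\alpha}(\eta(r))$ satisfies \eqref{Equation-for-Radi-Eigenfunction} with the correct eigenvalue $\mu$ and has value $1$ at $r=0$, the uniqueness statement from \cite{Chavel} quoted in the introduction forces it to coincide with the unique radial eigenfunction $\varphi_{\mu}(r)$, completing the proof. The only step requiring any care is the passage between the Euclidean variable $\eta$ and the hyperbolic variable $r$; everything else is an immediate combination of the preceding proposition and lemma, so I do not anticipate a substantive obstacle.
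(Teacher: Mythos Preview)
Your proposal is correct and follows essentially the same route as the paper: combine Proposition~\ref{eigenfunction-omega-proposition} with Lemma~\ref{Laplacian-Commutes-lemma} to see that $V_{\alpha}$ is a radial eigenfunction, convert between $\eta$ and $r$ via $\eta=\rho\tanh(r/(2\rho))$, check $V_{\alpha}(0)=1$, and invoke the uniqueness result from \cite{Chavel} (the paper also cites \cite{Olver} for the form $c_{1}f_{1}+c_{2}f_{2}$ with $f_{2}$ singular at $0$). The only difference is expository order and the level of detail you supply for $V_{\alpha}(0)=1$ and the reduction to the radial ODE.
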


\begin{proof}
    Recall that $\eta=|m|$ is the Euclidean distance between $m=(X,T)\in B(\rho)$ and the origin, while $r=r(m)=r(\eta)$ is the hyperbolic distance between the origin and $m$. Therefore, the relationship between $r$ and $\eta$ is
\begin{equation}\label{Eucli-Hype-Coordinates-relationship}
    r=\rho\ln\frac{\rho+\eta}{\rho-\eta}\quad\text{or}\quad\eta=\rho\tanh\left(\frac{r}{2\rho}\right)\,,
\end{equation}
which justifies the last expression in \eqref{Radial-func-presentation}.

Recall also that according to \eqref{Omega-as-Eigenfunction}, $\omega^\alpha$ is the eigenfunction of the hyperbolic Laplacian with the eigenvalue $(\alpha k-\alpha^2)/\rho^2$. Therefore, according to Lemma \ref{Laplacian-Commutes-lemma}, p.~\pageref{Laplacian-Commutes-lemma}, the radialization of $\omega^\alpha$ defined in \eqref{Radialization-of-omega} is also an eigenfunction with the same eigenvalue.

The last step is to show that the eigenfunction $\varphi_\mu(r)$ assuming the value $1$ at the origin is unique. We obtain the uniqueness by the procedure described in \cite{Olver}, pp.~148-153 or in \cite{Chavel}, p.~272 resulting that the general radial eigenfunction is on the form $c_1f_1(r)+c_2f_2(r)$, where $f_1(r)$ is an entire function of $r$ with $f_1(0)=1$, and $f_2(r)$ is defined for all $r>0$, with a singularity at $r=0$. Therefore, for a radial eigenfunction to have the value $1$ at the origin, we must set $c_2=0$ and then, we can see that the radial eigenfunction is defined uniquely by its value at the origin and by the eigenvalue. Observing that $\varphi_\mu(0)=V_{\alpha}(0)=1$ completes the proof of Theorem~\ref{Radial-func-repre-Thoerem}.
\end{proof}

\subsection{One Radius theorem for $\omega$.} 

Recall that One Radius Theorem for the function $\omega$ introduced in \eqref{Uniq-11} and considered as a function of two arbitrary non-equal points in Euclidean space was obtained in~\cite{Artamoshin_Sphe_Ratio}. In this subsection we are going to reproduce the statement of this theorem and use it later as the basic result in the proof of One Radius Theorem for radial eigenfunctions of the hyperbolic Laplacian.

\begin{theorem}[One Radius Theorem for $\omega$]\label{Basic-theorem}

Let $S^k$,
$k\in\mathbb{N}$, be a $k$-dimensional sphere of radius $R$
centered at the origin $O$, $x, y\in\mathbb{R}^{k+1}$ such that $r=|x|\neq|y|=R$ and $$\omega(x,y)=\left|\frac{|x|^2-|y|^2}{|x-y|^2}\right|\,.$$
Until further notice we assume that $R$ and a point $x\notin S^k$ are fixed.
Then, the following Statements hold.

\begin{description}

  \item[(A)]\label{Sphe-pro-direct-Statement} If $\alpha, \beta\in\mathbb{C}$ and $\alpha+\beta=k$, then
\begin{equation}\label{Sphe-Pro-Statement-2}
    \int\limits_{S^k}\omega^\alpha dS_y=
    \int\limits_{S^k}\omega^\beta dS_y\,.
\end{equation}

  \item[(B)]\label{sphe-pro-inverse-real-Statement} If $\alpha, \beta$ are real, then
\begin{equation}\label{Sphe-Pro-Statement-3}
     \int\limits_{S^k}\omega^\alpha dS_y=
    \int\limits_{S^k}\omega^\beta dS_y\quad \text{implies}\quad \alpha+\beta=k\,\,\,
    \text{or}\,\,\,\alpha=\beta  \,.
\end{equation}

  \item[(C)]\label{sphe-pro-liuville-Statement} For every $\beta\in \mathbb{C}$ there are infinitely many numbers $\alpha\in\mathbb{C}$ such that
\begin{equation}\label{Sphe-Pro-Statement-4}
    \int\limits_{S^k}\omega^\alpha dS_y=
    \int\limits_{S^k}\omega^\beta dS_y\,.
\end{equation}

    \item[(D)]\label{sphe-pro-inverse-complex-Statement} Suppose that for the fixed point $x\notin S^k$
\begin{equation}\label{Sphe-Pro-Statement-5-1}
    \max\{|\Im(\alpha)|, |\Im(\beta)|\}\leq \left.\frac\pi2 \right/ \ln\frac{R+r}{|R-r|}\,.
\end{equation}
Then
\begin{equation}\label{Sphe-Pro-Statement-6}
\int\limits_{S^k} \omega^\alpha dS_y =
\int\limits_{S^k} \omega^\beta dS_y \quad\text{implies}\quad
\alpha+\beta=k\quad\text{or}\quad\alpha=\beta \,.
\end{equation}

\end{description}
\end{theorem}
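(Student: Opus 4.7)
The plan is to handle all four parts through a single M\"obius involution of $S^k$ and then exploit the complex-analytic structure it reveals.

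\textbf{Part (A).} I would introduce an involution $\Phi\colon S^k\to S^k$, realized as the restriction to $S^k$ of a Euclidean inversion whose sphere is orthogonal to $S^k$, satisfying
\[
\omega(x,\Phi(y))=\omega(x,y)^{-1},\qquad dS_{\Phi(y)}=\omega(x,y)^{k}\,dS_y.
\]
The first identity is a short computation, and the second follows from the conformal-factor formula for inversion restricted to a $k$-submanifold. The substitution $y\mapsto\Phi(y)$ then converts $\int_{S^k}\omega^\alpha\,dS_y$ into $\int_{S^k}\omega^{k-\alpha}\,dS_y$, which is (A).

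\textbf{Part (B).} Set $I(\alpha)=\int_{S^k}\omega^\alpha\,dS_y$ and $J(s)=I(k/2+s)$, so (A) says $J$ is even. Push forward the measure $\omega(x,y)^{k/2}\,dS_y$ under $y\mapsto t=\ln\omega(x,y)\in[-L,L]$ with $L=\ln\frac{R+r}{|R-r|}$; the two identities of (A) force the resulting positive measure $\tilde\nu$ to be symmetric in $t$, so
\[
J(s)=\tilde\nu(\{0\})+\int_{0}^{L}2\cosh(st)\,d\tilde\nu(t).
\]
For real $s$ this representation makes $J$ even and strictly convex (since $r>0$ ensures $\tilde\nu$ has mass away from the origin), so $J(s_1)=J(s_2)$ forces $s_1=\pm s_2$; translating back to $\alpha,\beta$ yields (B).

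\textbf{Part (C).} Writing $I(\alpha)=\int_{-L}^{L}e^{\alpha t}\,d\nu(t)$ shows that $I$ is entire of exponential type at most $L$, and $I(\alpha)\asymp e^{L\alpha}$ on the positive real axis shows it is neither constant nor a polynomial. If $I-I(\beta)$ had only finitely many zeros, Hadamard factorization for order-one entire functions would give $I(\alpha)-I(\beta)=P(\alpha)e^{a\alpha+b}$ with $P$ a polynomial; the symmetry $I(\alpha)=I(k-\alpha)$ from (A) then forces $a=0$, making $I$ a polynomial, contradicting the exponential growth. Hence $I(\alpha)=I(\beta)$ has infinitely many solutions.

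\textbf{Part (D).} This is the main obstacle. From the representation in (B), the identity $\cosh a-\cosh b=2\sinh\frac{a+b}{2}\sinh\frac{a-b}{2}$ gives
\[
J(s_1)-J(s_2)=4\int_{0}^{L}\sinh(pt)\sinh(qt)\,d\tilde\nu(t),\qquad p=\tfrac{s_1+s_2}{2},\ q=\tfrac{s_1-s_2}{2},
\]
so the task is to show this integral is nonzero whenever $p,q\ne 0$. The hypothesis $|\Im s_i|\le\pi/(2L)$ forces $|\Im(pt)|,|\Im(qt)|\le\pi/2$ throughout $[0,L]$, placing the arguments of the two hyperbolic sines inside the strip where $\sinh$ vanishes only at the origin. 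Promoting this pointwise non-vanishing to non-vanishing of the integral is the delicate step: I would attempt either a uniform rotation $e^{i\theta}$ making $\Re\bigl(e^{i\theta}\sinh(pt)\sinh(qt)\bigr)>0$ throughout $(0,L]$, or a Rolle-type analytic continuation of $\lambda\mapsto J((1-\lambda)s_1+\lambda s_2)$ that reduces the question to the real-variable case (B). The sharpness of the bound $\pi/(2L)$ suggests this step belongs to the Phragm\'en--Lindel\"of / quasianalytic uniqueness circle, and this is where I expect the real difficulty to lie.
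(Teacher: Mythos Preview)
The paper does \emph{not} prove Theorem~\ref{Basic-theorem}; it merely quotes the statement from the companion paper~\cite{Artamoshin_Sphe_Ratio} and uses it as a black box. Hence there is no ``paper's proof'' to compare against, and your proposal has to be judged on its own merits.

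Your arguments for (A), (B), and (C) are correct. A small correction for (A): when $|x|<R$ there is no \emph{real} sphere centered at $x$ orthogonal to $S^k$; the involution you want is the ``second intersection'' map $y\mapsto x+\dfrac{r^2-R^2}{|y-x|^2}(y-x)$, which is an inversion with negative radius-squared (equivalently, inversion of radius $\sqrt{|R^2-r^2|}$ composed with the point reflection in $x$). It does preserve $S^k$, satisfies $\omega(x,\Phi(y))=\omega(x,y)^{-1}$, and has conformal factor $\omega$, hence surface Jacobian $\omega^k$ on $S^k$. With that tweak your change of variables goes through. The convexity argument for (B) and the Hadamard-factorization argument for (C) are clean.

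Part (D), however, is a genuine gap, and you are right to flag it. Your integral representation
\[
J(s_1)-J(s_2)=4\int_0^L\sinh(pt)\sinh(qt)\,d\tilde\nu(t),\qquad p=\tfrac{s_1+s_2}{2},\ q=\tfrac{s_1-s_2}{2},
\]
is correct, and you have noticed that $|\Im(pt)|,|\Im(qt)|\le\pi/2$ on $[0,L]$, so each factor $\sinh(pt),\sinh(qt)$ is nonzero. But pointwise non-vanishing does \emph{not} yield non-vanishing of the integral, and neither of your two suggested routes is presently a proof. For the ``uniform rotation'' idea, note first that the hypothesis actually gives the stronger bound $|\Im p|+|\Im q|\le\pi/(2L)$ (since $\max\{|a+b|,|a-b|\}=|a|+|b|$), which you should use; even so, one needs a quantitative lemma of the type $|\arg(\sinh z/z)|\le|\Im z|$ on the half-strip $\Re z\ge0,\ |\Im z|\le\pi/2$ in order to force $\sinh(pt)\sinh(qt)/(pq\,t^2)$ into a fixed closed half-plane and conclude. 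That lemma is plausible but not obvious, and you have not supplied it. The alternative ``analytic Rolle'' idea is circular as stated: integrating $J'$ along the segment from $s_1$ to $s_2$ reproduces exactly the same $\sinh\!\cdot\!\sinh$ integral. So (D) remains open in your write-up; the missing ingredient is precisely an argument-control estimate for $\sinh z$ (or an equivalent univalence statement for $J$) on the critical strip, and that is where the content of the cited paper lies.
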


\begin{remark}
    Statement~(C) shows that the implication introduced in \eqref{Sphe-Pro-Statement-3} fails if we just let $\alpha$ and $\beta$ be complex. However, a certain additional restriction formulated in Statement~(D) for the complex numbers $\alpha, \beta$ allows us to obtain precisely the same implication as in \eqref{Sphe-Pro-Statement-3}.
\end{remark}

\begin{remark}
    Observe that Statement~(B) is a special case of Statement~(D). Indeed, if $\alpha, \beta$ are real, then \eqref{Sphe-Pro-Statement-5-1} holds for every $r\neq R$.
\end{remark}

\section {One Radius Theorem for radial eigenfunctions.}

In this section we shall see that if $\varphi_\mu(r)=\varphi_\nu(r)$ for $r=0$ and for some $r=r_0$ chosen sufficiently close to the origin, then $\mu=\nu$ and $\varphi_\mu(r)=\varphi_\nu(r)$ for all $r\in[0,\infty)$. In other words, we shall introduce a condition for an eigenvalue under which the eigenfunction is uniquely determined just by its value at $\emph{one}$ point sufficiently close to the origin. Moreover, we shall see that with an additional condition for the eigenvalue, a radial eigenfunction is uniquely determined by its value at an arbitrary point.

\begin{proposition}\label{Parabola-Strip-equivalence-Lemma} Let $\alpha\in\mathbb{C}$ and $\mu=\Phi(\alpha)$, where $\Phi:\mathbb{C}\rightarrow \mathbb{C}$ is given by $\Phi(\alpha)=-\kappa(\alpha k-\alpha^2)$, where $\kappa<0, k>0$ are constants. Then
\begin{equation}\label{One_Rad_1}
   \Im(\alpha)\in[-p,p]\iff \mu\in\{\text{Shaded area inside the parabola bellow}\}
\end{equation}

\begin{figure}[!h]
    \centering
    \epsfig{figure=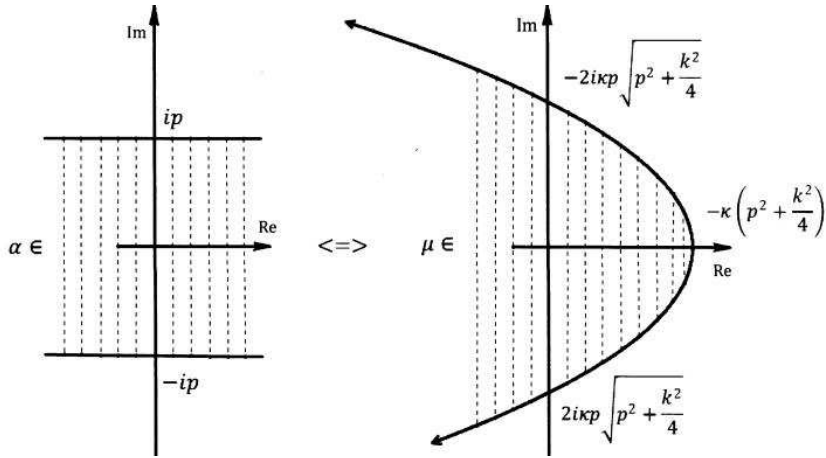,height=6cm}\\
  \caption{Parabola and Strip equivalence.}\label{Parabola-Strip-Equiv}
\end{figure}

\end{proposition}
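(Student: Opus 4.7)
The plan is to reduce $\Phi$ to a pure quadratic by completing the square, and then track the image of the horizontal strip $|\Im(\beta)|\leq p$ under the squaring map. Setting $\beta=\alpha-k/2$ gives
\begin{equation*}
    k\alpha-\alpha^2=\tfrac{k^2}{4}-\beta^2,\qquad \mu=\Phi(\alpha)=-\kappa\tfrac{k^2}{4}+\kappa\beta^2,
\end{equation*}
and since $\Im(\alpha)=\Im(\beta)$, the condition $\Im(\alpha)\in[-p,p]$ is equivalent to $\beta$ lying in the closed strip $\{s+it:s\in\mathbb{R},\,|t|\leq p\}$. So the claim reduces to identifying the image of this strip under the entire map $\beta\mapsto -\kappa k^2/4+\kappa\beta^2$.

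Next, I would parametrize $\beta=s+it$ and separate real and imaginary parts to get
\begin{equation*}
    \Re(\mu)=-\kappa\tfrac{k^2}{4}+\kappa(s^2-t^2),\qquad \Im(\mu)=2\kappa s t.
\end{equation*}
Fixing $t$ with $0<|t|\leq p$ and eliminating $s$ via $s=\Im(\mu)/(2\kappa t)$ produces the parabola
\begin{equation*}
    \Re(\mu)=-\kappa\!\left(\tfrac{k^2}{4}+t^2\right)+\frac{\Im(\mu)^2}{4\kappa t^2}.
\end{equation*}
Because $\kappa<0$, the coefficient of $\Im(\mu)^2$ is negative, so each such parabola opens to the left with real-axis vertex at $-\kappa(\tfrac{k^2}{4}+t^2)$. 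A one-line calculation (differentiating in $t^2$ with $\Im(\mu)$ held fixed) shows that, for every fixed imaginary part, the right-hand side above is strictly increasing in $|t|$ on $(0,p]$. Hence the parabolas sweep outward monotonically as $|t|$ grows, and the degenerate level $t=0$ contributes only the real ray $\Re(\mu)\in(-\infty,-\kappa k^2/4]$, which already lies to the left of the vertex of the $|t|=p$ parabola.

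Putting these observations together, the image of the strip is exactly the closed region
\begin{equation*}
    \Re(\mu)\leq -\kappa\!\left(\tfrac{k^2}{4}+p^2\right)+\frac{\Im(\mu)^2}{4\kappa p^2},
\end{equation*}
which (again since $\kappa<0$) is precisely the concave side of the leftward-opening boundary parabola, i.e.\ the shaded region of Figure~\ref{Parabola-Strip-Equiv}. The converse direction is immediate: given any $\mu$ in this region, the quadratic $\kappa\beta^2=\mu+\kappa k^2/4$ admits a root $\beta$ in the strip, and then $\alpha=\beta+k/2$ satisfies $\Im(\alpha)\in[-p,p]$. The only real subtlety is sign bookkeeping (because $\kappa<0$ reverses several inequalities) and treating the $t=0$ locus separately so that the boundary parabola is swept without gaps; everything else is direct algebra.
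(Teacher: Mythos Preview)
Your argument is correct and follows essentially the same route as the paper: complete the square via $\beta=\alpha-k/2$, parametrize horizontal lines $\Im\beta=t$, and observe that each maps to a parabola of the form $\Re\mu=-\kappa(k^2/4+t^2)+\Im(\mu)^2/(4\kappa t^2)$, with the outermost one ($|t|=p$) bounding the image. If anything, you are slightly more thorough than the paper, which only notes that smaller $p$ gives a narrower parabola, whereas you make the monotone nesting in $t^2$ and the converse direction explicit.
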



\begin{proof}[Proof of Proposition \ref{Parabola-Strip-equivalence-Lemma}.]  Let $\alpha=a+ib; a_1=a-k/2$ and $t=a_1+ib$. Then $\alpha=a+ib=k/2+t$. The direct computation shows that $\Phi(k/2+t)=\Phi(k/2-t)$, which means that the image of the following strip
\begin{equation}\label{One_Rad_2}
   \Xi(-p,p)=\{\alpha\mid\Im(\alpha)\in[-p,p]\}
\end{equation}
must be the same as the image of the upper half of the strip above. I.e.,
\begin{equation}\label{One_Rad_3}
   \Phi(\Xi(-p,p))=\Phi(\Xi(0,p))\,,
\end{equation}
where
\begin{equation}\label{One_Rad_4}
   \Xi(0,p)=\{\alpha\mid\Im(\alpha)\in[0,p]\}\,.
\end{equation}
To figure out what is the image of $\Xi(0,p)$, it is enough to look at the image of a horizontal line from $\Xi(0,p)$. Let us choose the boundary line: $\alpha=\alpha(a_1)=k/2+ip+a_1$, where $p$ is fixed and $a_1$ serves as a parameter. Then,
\begin{equation}\label{One_Rad_5}
\begin{split}
   & \Phi(\alpha(a_1))=-\kappa\left(\frac{k^2}{4}+p^2 \right)+\kappa a_1^2+2i\kappa p a_1
   \\& =-\kappa\left(\frac{k^2}{4}+p^2\right)+\frac{1}{4\kappa p^2}(\Im\Phi(\alpha))^2+i\Im\Phi(\alpha)\,,
\end{split}
\end{equation}
where, clearly, the imaginary part depends on $a_1$ linearly, while the real part depends quadratically. Therefore, the horizontal line chosen is mapped to a parabola. Any parabola is uniquely determined by three arbitrary points on it. Thus, to determine this parabola, we find all points of intersections with the coordinate axis. If $a_1=0$, then
\begin{equation}\label{One_Rad_6}
   \Phi(\alpha(0))=-\kappa\left(\frac{k^2}{4}+p^2 \right)
\end{equation}
gives the point of intersection the parabola with the horizontal coordinate axis. If $a_1=\pm\sqrt{p^2+k^2/4}$, then
\begin{equation}\label{One_Rad_7}
   \Phi(\alpha(\pm\sqrt{p^2+k^2/4}))=\pm2i\kappa p\sqrt{p^2+k^2/4}\,,
\end{equation}
what is denoted on Figure \ref{Parabola-Strip-Equiv} above. Therefore, both of these two parallel lines $\alpha(a_1)=\pm ip+k/2+a_1$ are mapped to the parabola described above. Notice also that this parabola is symmetric with respect to the horizontal axis. In addition, the smaller the parameter $p$, the narrower the parabola and its tip is closer to the origin.
So, the proof of Proposition~\ref{Parabola-Strip-equivalence-Lemma} is complete.
\end{proof}

\begin{remark}
Note that if the parameter $p$ is zero, then the parabola is folded up to the half of the real line $(-\infty, -\kappa k^2/4]$, which is the image of the real line.
\end{remark}

\begin{theorem}[\textbf{One Radius theorem for radial eigenfunctions}]\label{One-radius-Theorem}

Let $\mu\neq\nu$ and $\varphi_\nu(r)$, $\varphi_\mu(r)$ be two radial eigenfunctions for the hyperbolic Laplacian given by
\begin{equation}\label{One_Rad_8}
    \vartriangle \varphi(r)=\varphi^{''}(r)+\frac{k}{\rho}\coth\left(\frac{r}{\rho}\right)\varphi^{'}(r) \,,\,\text{where}\,\,0\leq r<\infty
\end{equation}
in a ball model of a hyperbolic $(k+1)-$ dimensional space of a constant sectional curvature $\kappa=-1/\rho^2<0$. Let $\varphi_\nu(0)=\varphi_\mu(0)=1$.
Then $\varphi_\nu(r)\neq\varphi_\mu(r)$ for every $r\in(0, \pi\rho/(2p)]$, where $p=\max\{|\Im(\alpha)|, |\Im(\beta)|\}$ and $\alpha, \beta$ are complex numbers related to $\mu, \nu$ by the following quadratic equations
\begin{equation}\label{One_Rad_9}
    \mu=-\kappa(\alpha k-\alpha^2) \quad \text{and} \quad \nu=-\kappa(\beta k-\beta^2)\,.
\end{equation}
\end{theorem}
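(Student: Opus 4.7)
The strategy is to reduce the statement to Statement~(D) of Theorem~\ref{Basic-theorem} via the integral representation $\varphi_\mu(r) = V_\alpha(\eta(r))$ established in Theorem~\ref{Radial-func-repre-Thoerem}. I would argue by contradiction: suppose $\varphi_\mu(r_0) = \varphi_\nu(r_0)$ for some $r_0 \in (0, \pi\rho/(2p)]$, and show this forces $\alpha = \beta$ or $\alpha + \beta = k$; both outcomes give $\mu = \nu$ via the relations \eqref{One_Rad_9}, contradicting $\mu \neq \nu$.

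First I would unpack what $\varphi_\mu(r_0) = \varphi_\nu(r_0)$ means. By Theorem~\ref{Radial-func-repre-Thoerem}, it is equivalent to $V_\alpha(\eta_0) = V_\beta(\eta_0)$ where $\eta_0 = \rho\tanh(r_0/(2\rho))$. Since both radializations are normalized by the same $|S(\eta_0)|$, this is in turn equivalent to
\[
\int_{S(\eta_0)} \omega^\alpha(u, m_1)\, dS_{m_1} = \int_{S(\eta_0)} \omega^\beta(u, m_1)\, dS_{m_1},
\]
where $u \in S(\rho)$ is the fixed point appearing in the definition of $V_\alpha$. In the notation of Theorem~\ref{Basic-theorem} the integration sphere has radius $R = \eta_0$ and the external point $u$ sits at distance $\rho$ from the origin, so the parameter $r$ of that theorem equals $\rho$.

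The key bookkeeping step is translating the hyperbolic bound $r_0 \leq \pi\rho/(2p)$ into the Euclidean bound appearing in Statement~(D). Using $\eta_0 = \rho\tanh(r_0/(2\rho))$, a direct computation gives
\[
\ln\frac{\rho + \eta_0}{\rho - \eta_0} = \frac{r_0}{\rho},
\]
so the hypothesis $p \leq (\pi/2)/\ln((R+r)/|R-r|)$ of Statement~(D) becomes precisely $p \leq \pi\rho/(2 r_0)$, which is what the theorem assumes. Hence Statement~(D) applies and yields $\alpha = \beta$ or $\alpha + \beta = k$.

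Finally, if $\alpha = \beta$ then $\mu = \nu$ is immediate from \eqref{One_Rad_9}. If $\alpha + \beta = k$ then $\alpha(k - \alpha) = \alpha\beta = \beta(k - \beta)$, so $\alpha k - \alpha^2 = \beta k - \beta^2$ and again $\mu = \nu$ via \eqref{One_Rad_9}, the desired contradiction. The only genuine obstacle is the Euclidean-versus-hyperbolic translation of the radius bound; once the identity $\ln((\rho+\eta_0)/(\rho-\eta_0)) = r_0/\rho$ is in hand, the theorem drops out as a clean corollary of Statement~(D) of Theorem~\ref{Basic-theorem}.
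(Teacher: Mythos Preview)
Your proof is correct and follows essentially the same route as the paper: argue by contradiction, use the integral representation of Theorem~\ref{Radial-func-repre-Thoerem} to convert $\varphi_\mu(r_0)=\varphi_\nu(r_0)$ into the equality of the $\omega^\alpha$ and $\omega^\beta$ integrals over $S(\eta_0)$, verify via $r_0=\rho\ln((\rho+\eta_0)/(\rho-\eta_0))$ that the bound $r_0\le \pi\rho/(2p)$ is exactly the hypothesis~\eqref{Sphe-Pro-Statement-5-1} of Statement~(D), and conclude $\alpha=\beta$ or $\alpha+\beta=k$, hence $\mu=\nu$. Your identification of the roles $R=\eta_0$, $|x|=\rho$ in Theorem~\ref{Basic-theorem} and the explicit check that $\alpha+\beta=k$ forces $\alpha k-\alpha^2=\beta k-\beta^2$ are the only details the paper leaves implicit.
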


\begin{remark} Recall that the first quadratic equation above together with \eqref{One_Rad_5} implies that
\begin{equation}\label{One_Rad_10}
    \Im(\alpha)\in [-p, p] \iff
    \Re\mu\leq-\kappa\left(p^2+\frac{k^2}{4}\right)+\frac{1}{4\kappa p^2}(\Im\mu)^2\,,
\end{equation}
i.e., $\mu$ belongs to the inner part of the parabola depending on parameter $p$ and pictured on Figure~\ref{Parabola-Strip-Equiv}, p.~\pageref{Parabola-Strip-Equiv}.

 If $p=0$, we arrive at the case when $\alpha, \beta$ are real or, equivalently, $\nu, \mu\leq -\kappa k^2/4$, which, according to the One Radius Theorem, means that the condition $\mu\neq\nu$ implies that $\varphi_\nu(r)\neq\varphi_\mu(r)$ for every $r\in (0, \pi\rho/(2p)]_{p=0}=(0,\infty)$.
\end{remark}

\begin{proof}[Proof of Theorem~\ref{One-radius-Theorem}.]

Let us assume that $\varphi_\mu(r)=\varphi_\nu(r)$ for some $r\in(0, \pi\rho/(2p)]$. Recall that according to Theorem~\ref{Radial-func-repre-Thoerem} from p.~\pageref{Radial-func-repre-Thoerem}, a radial eigenfunction in $B^{k+1}_\rho$ assuming the value~$1$ at the origin is uniquely defined by its eigenvalue and this eigenfunction can be expressed by using the Euclidean coordinates as
\begin{equation}\label{One_Rad_11}
   \varphi_\mu(r)=V_\alpha(\eta(r))=\frac{1}{|S(\eta)|}\int\limits_{S(\eta)}\omega^\alpha(u,m)dS_m\,,
\end{equation}
where $\mu=-\kappa(\alpha k-\alpha^2);\,\, |m|=\eta;\,\, r=\rho\ln[(\rho+\eta)/(\rho-\eta)]$ and $|u|=\rho$. Let us set $x=u\notin S^k(\eta)$ and $y=m\in S^k(\eta)$ in Statement (D) of Theorem~\ref{Basic-theorem}, p.~\pageref{Basic-theorem}.
Using \eqref{One_Rad_11} we can observe that
\begin{equation}\label{One_Rad_12}
   \varphi_\nu(r)=\varphi_\mu(r)\quad\text{for some}\,\,r=\rho\ln\frac{\rho+\eta}{\rho-\eta}\in(0, \pi\rho/2p]
\end{equation}
is equivalent to say that there exists $\eta=\rho\tanh(r/2\rho)$ such that
\begin{equation}\label{One_Rad_13}
\begin{split}
   & \int\limits_{S(\eta)}\omega^\alpha(u,m)dS_m=\int\limits_{S(\eta)}\omega^\beta(u,m)dS_m\quad\text{and}
   \\& p=\max\{|\Im(\alpha)|,|\Im(\beta)|\}\leq\left.\frac\pi2\right/\ln\frac{\rho+\eta}{\rho-\eta}\,.
\end{split}
\end{equation}
It is clear that \eqref{One_Rad_13}, according to Statement~(D) of Main Theorem~\ref{Basic-theorem}, p.~\pageref{Basic-theorem} yields $\alpha+\beta=k$ or $\alpha=\beta$, which implies that $\mu=\nu$. The last equality contradicts to the assumption of the theorem, which completes the proof of Theorem~\ref{One-radius-Theorem}.
\end{proof}




\begin{corollary}\label{Many_Eigefcns_Cor} 
Using \eqref{One_Rad_10}, we observe that
\begin{equation}\label{One_Rad_14}
   \Omega(p)=
   \left\{\varphi_\mu(r)\mid\varphi_\mu(0)=1\,\,\text{and}\,\,\,\Re\mu\leq-\kappa\left(p^2+\frac{k^2}{4}\right)+\frac{(\Im\mu)^2}{4\kappa p^2} \right\}
\end{equation}
is the set of all radial eigenfunctions such that $\varphi_\mu(0)=1$ and with eigenvalues within the shaded area inside the parabola depending on $p$. The parabola was pictured on Figure~\ref{Parabola-Strip-Equiv}, p.~\pageref{Parabola-Strip-Equiv}. Then the value of $\varphi_\mu(r_0)$ at any point $r_0\in(0,\pi\rho/(2p)]$ determines the radial eigenfunction $\varphi_\mu(r)\in\Omega(p)$ uniquely. Meanwhile, according to Statement (C) of Theorem~\ref{Basic-theorem} form p.~\pageref{Basic-theorem}, there are infinitely many eigenfunctions $\varphi_\nu(r)\notin\Omega(p)$ such that $\varphi_\nu(r_0)=\varphi_\mu(r_0)$.
\end{corollary}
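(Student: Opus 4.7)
The plan is to split the corollary into two independent assertions and dispatch each by invoking a prior result of the paper.

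For the uniqueness half, I would take two eigenfunctions $\varphi_\mu,\varphi_\nu\in\Omega(p)$, write $\mu=-\kappa(\alpha k-\alpha^2)$ and $\nu=-\kappa(\beta k-\beta^2)$, and read off from the strip--parabola equivalence \eqref{One_Rad_10} that $|\Im(\alpha)|,|\Im(\beta)|\leq p$. Theorem~\ref{One-radius-Theorem} then forbids $\varphi_\mu(r_0)=\varphi_\nu(r_0)$ at any point $r_0\in(0,\pi\rho/(2p)]$ unless $\mu=\nu$, and Theorem~\ref{Radial-func-repre-Thoerem} shows that a radial eigenfunction normalized by $\varphi(0)=1$ is determined by its eigenvalue alone. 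Hence the value $\varphi_\mu(r_0)$ pins down $\varphi_\mu$ within $\Omega(p)$.

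For the second half I would switch to the integral representation \eqref{One_Rad_11}. Fixing $\eta_0=\rho\tanh(r_0/(2\rho))$ turns the condition $\varphi_\nu(r_0)=\varphi_\mu(r_0)$ into the sphere-integral identity $\int_{S(\eta_0)}\omega^\beta(u,m)\,dS_m=\int_{S(\eta_0)}\omega^\alpha(u,m)\,dS_m$. Statement~(C) of Theorem~\ref{Basic-theorem} supplies an infinite sequence $\{\beta_j\}\subset\mathbb{C}$ for which this identity holds; passing back through $\nu_j=-\kappa\beta_j(k-\beta_j)$ and noting that this quadratic is at most two-to-one, I would obtain infinitely many distinct eigenvalues $\nu_j$ with $\varphi_{\nu_j}(r_0)=\varphi_\mu(r_0)$.

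The only delicate point is to confirm that infinitely many of these $\nu_j$ really lie outside $\Omega(p)$, rather than happening to fall inside the parabolic region. But this follows for free from the first half: any $\nu_j\in\Omega(p)$ would have to equal $\mu$ by the uniqueness just established, contradicting the mutual distinctness of the $\nu_j$. Consequently all but at most one of the eigenvalues produced by Statement~(C) lie outside $\Omega(p)$, yielding the infinite family of eigenfunctions promised by the corollary.
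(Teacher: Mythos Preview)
Your proposal is correct and follows exactly the route the paper intends: the corollary is stated there without a separate proof, as an immediate consequence of Theorem~\ref{One-radius-Theorem} for the uniqueness assertion and Statement~(C) of Theorem~\ref{Basic-theorem} for the infinite family. Your explicit verification that all but at most one of the $\nu_j$ must lie outside $\Omega(p)$---by re-invoking the uniqueness half---fills in a detail the paper leaves tacit.
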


\begin{corollary}\label{Many_Eigefcns_Cor2} 
If $p=0$ or, equivalently, if $\Omega(0)$ is defined as
\begin{equation}\label{One_Rad_15}
   \Omega(0)=
   \left\{\varphi_\mu(r)\mid\varphi_\mu(0)=1\quad\text{and}\quad\mu\leq\frac{-\kappa k^2}{4}\right\}\,,
\end{equation}
then the value $\varphi_\mu(r_0)$ defines $\varphi_\mu(r)\in\Omega(0)$ uniquely for any $r_0\in(0,\infty)$. And again, by Statement (C) of Theorem~\ref{Basic-theorem} form p.~\pageref{Basic-theorem}, there are infinitely many radial eigenfunctions $\varphi_\nu(r)\notin\Omega(0)$ such that $\varphi_\nu(r_0)=\varphi_\mu(r_0)$.
\end{corollary}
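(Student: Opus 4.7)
The plan is to run the argument of Theorem~\ref{One-radius-Theorem} and Corollary~\ref{Many_Eigefcns_Cor} in the degenerate case $p=0$, exploiting the fact that Statement~(B) of Theorem~\ref{Basic-theorem} imposes no upper bound on the admissible radius. The key observation is that the condition $\mu\leq -\kappa k^2/4$ forces the quadratic $\mu=-\kappa(\alpha k-\alpha^2)$ to admit a real root $\alpha\in\mathbb{R}$, so $|\Im(\alpha)|=0$ and there is no obstruction on the radius $r$ at which we may test the functions. Informally, as $p\to 0$ the cutoff $\pi\rho/(2p)$ appearing in Theorem~\ref{One-radius-Theorem} blows up to $+\infty$, precisely matching the unrestricted conclusion we want.

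First I would handle the uniqueness half. Fix $\varphi_\mu\in\Omega(0)$ with real parameter $\alpha$ and suppose $\varphi_\nu\in\Omega(0)$, with real parameter $\beta$, satisfies $\varphi_\mu(r_0)=\varphi_\nu(r_0)$ for some $r_0\in(0,\infty)$. Setting $\eta_0=\rho\tanh(r_0/(2\rho))\in(0,\rho)$ and using the representation \eqref{One_Rad_11}, the equality $V_\alpha(\eta_0)=V_\beta(\eta_0)$ becomes the integral identity
\begin{equation*}
\int\limits_{S(\eta_0)}\omega^\alpha(u,m)\,dS_m=\int\limits_{S(\eta_0)}\omega^\beta(u,m)\,dS_m,
\end{equation*}
with $u\in S(\rho)$ a fixed point outside $S(\eta_0)$. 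Since $\alpha,\beta\in\mathbb{R}$, Statement~(B) of Theorem~\ref{Basic-theorem} applies directly and yields $\alpha=\beta$ or $\alpha+\beta=k$; both cases give $\mu=-\kappa(\alpha k-\alpha^2)=-\kappa(\beta k-\beta^2)=\nu$, and the uniqueness portion of Theorem~\ref{Radial-func-repre-Thoerem} then forces $\varphi_\mu\equiv\varphi_\nu$.

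For the second claim I would invoke Statement~(C) of Theorem~\ref{Basic-theorem} directly: it produces infinitely many complex $\beta\in\mathbb{C}$ satisfying the displayed integral identity at the prescribed radius $\eta_0$. The one step requiring a little care --- and what I expect to be the main, though minor, obstacle --- is ensuring that infinitely many of the resulting eigenvalues $\nu=-\kappa(\beta k-\beta^2)$ lie outside $\Omega(0)$. This follows because the $\mu$-values inside $\Omega(0)$ correspond precisely to real $\alpha$, so any non-real $\beta$ automatically produces a non-real (hence non-$\Omega(0)$) $\nu$; by Statement~(B) the real $\beta$ satisfying the integral identity form at most the two-element set $\{\alpha,k-\alpha\}$, so infinitely many of the $\beta$ from Statement~(C) are non-real. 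Since the involution $\beta\mapsto k-\beta$ is the only coincidence in the two-to-one map $\beta\mapsto\nu$, these non-real $\beta$'s generate infinitely many distinct $\nu\notin\Omega(0)$, and correspondingly infinitely many distinct $\varphi_\nu\notin\Omega(0)$ with $\varphi_\nu(r_0)=\varphi_\mu(r_0)$, completing the argument.
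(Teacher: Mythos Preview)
Your proposal is correct and follows exactly the approach implicit in the paper: the corollary is simply the $p=0$ specialization of Theorem~\ref{One-radius-Theorem}/Corollary~\ref{Many_Eigefcns_Cor}, with Statement~(B) replacing Statement~(D) since $\alpha,\beta\in\mathbb{R}$, together with a direct appeal to Statement~(C) for the second claim. In fact you are more careful than the paper, which does not spell out why infinitely many of the $\beta$'s produced by Statement~(C) yield $\nu\notin\Omega(0)$; your observation that at most two real $\beta$ can satisfy the integral identity (by Statement~(B)), so infinitely many must be non-real and hence give $\nu$ either non-real or strictly greater than $-\kappa k^2/4$, is the right way to close that small gap.
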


\bibliographystyle{IEEEtran}

\end{document}